\tikzset{%
  raindrop/.pic={
    code={\tikzset{scale=1/10}
 \clip [preaction={top color=blue!50!cyan!50, bottom color=blue!50!cyan}]
 (0,0)  .. controls ++(0,-1) and ++(0,1) .. (1,-2)
 arc (360:180:1)
 .. controls ++(0,1) and ++(0,-1) .. (0,0) -- cycle;
 \foreach \j in {1,3,...,20}
 \shade [top color=blue!50!cyan!50, shift=(270:0), xscale=1-\j/40,yscale=1-\j/80, white, opacity=1/15]
 [rotate=-\j] (0,0)  .. controls ++(0,-1) and ++(0,1) .. (1,-2)
 arc (360:180:1)
 .. controls ++(0,1) and ++(0,-1) .. (0,0) -- cycle;
  }}}
\newcommand{\no}{\nonumber}
\newcommand{\bi}{\begin{itemize}}
\newcommand{\ei}{\end{itemize}}
\newcommand{\E}{\mathbf{E}}
\renewcommand{\P}{\mathbf{P}}
\newcommand{\ind}[1]{\mathbf{1}{\{ #1 \}}}
\renewcommand{\le}{\leq}
\renewcommand{\ge}{\geq}
\newcommand*\proc{{\mathpalette\bigcdot@{.7}}}
\newcommand*\bigcdot@[2]{\mathbin{\vcenter{\hbox{\scalebox{#2}{$\m@th#1\bullet$}}}}}
\DeclareMathOperator{\Geo}{Geo}
\newtheorem{thm}{Theorem}
\newtheorem{lemma}[thm]{Lemma}
\newtheorem{prop}[thm]{Proposition}
\theoremstyle{remark}
\theoremstyle{definition}
\renewcommand{\vec}{\mathbf }
\newcommand{\commentout}[1]{}
\author[Cristali]{Irina Cristali}
\address{Department of Mathematics, Duke University}
\email{irina.cristali@duke.edu}
\author[Ranjan]{Vinit Ranjan}
\email{vinit.ranjan@duke.edu}
\author[Steinberg]{Jake Steinberg}
\email{jake.steinberg@duke.edu}
\author[Beckman]{Erin Beckman}
\email{ebeckman@math.duke.edu}
\author[Durrett]{Rick Durrett}
\email{rtd@math.duke.edu}
\author[Junge]{Matthew Junge}
\email{jungem@math.duke.edu}
\author[Nolen]{James Nolen}
\email{nolen@math.duke.edu}
\begin{document}

\title{Block size in Geometric($p$)-biased permutations}

\maketitle

\begin{abstract}
Fix a probability distribution $\mathbf p = (p_1, p_2, \hdots)$ on the positive integers. The first block in a $\mathbf p$-biased permutation can be visualized in terms of raindrops that land at each positive integer $j$ with probability $p_j$. It is the first point $K$ so that all sites in $[1,K]$ are wet and all sites in $(K,\infty)$ are dry. 
For the geometric distribution $p_j= p(1-p)^{j-1}$ we show that $p \log K$ converges in probability to an explicit constant  as $p$ tends to 0. Additionally, we prove that if $\mathbf p$ has a stretch exponential distribution, then $K$ is infinite with positive probability.  

\end{abstract}

\section{Introduction}

Let $\vec p = (p_1, p_2,\hdots)$ be a discrete probability distribution  on the positive integers with each $p_j > 0$, and $\mathbf X = (X_1,X_2,\hdots)$ be a sequence of independent $\vec p$-distributed random variables. 
%
 A \emph{$\vec p$-biased permutation} is a map $i \to \Pi_i$ where  $(\Pi_1 , \Pi_2 , \hdots)$ is the sequence of distinct values in order of appearance from $\mathbf X.$ For example, if $\mathbf X = (3,1,4,1,3,1,2, \hdots)$, the beginning of $\Pi$ is 
\begin{center}
\begin{tabular}{ccccc}
$i$ & 1 & 2 &  3 & 4\ $ \cdots$ \\
$\Pi_i$ & 3 & 1 & 4\ & 2 $\cdots$
\end{tabular}.
\end{center}
The \emph{first block} in $\Pi$ is the smallest interval $\{1,\hdots, K\} := [K]$ such that $\Pi([K]) = [K]$. We call $K$ the \emph{block size}, because it is the first point at which the restriction $\Pi|_{[k]}$ becomes an element of the symmetric group $S_k$, i.e.\
$$K = \inf \{ k \colon \Pi |_{[k]} \in S_k\}.$$
Also, define $N$ to be the minimal number of samples so that $\{X_1,\hdots, X_N\} = [K]$. In the example above we have $K=4$ and $N=7$.  

One way to visualize the formation of blocks is by imagining rain falling on a partitioned stick. The \emph{$\mathbf p$-rainstick process} starts with a partition of the unit interval where the $j$th segment ordering from left to right has size $p_j>0$. 
Raindrops fall on points chosen uniformly at random from the unit interval.  A segment starts out dry and becomes wet after a raindrop falls on it. We can think of $X_n$ as the index of the segment hit by the $n$th raindrop. Thus, the $\mathbf p$-rainstick process can be thought of as occurring on $[0,1]$, or on the positive integers. In this setting, $N$ is the first time that, for some $K$,  $[1,K]$ is wet and $(K,\infty)$ is dry. In this paper, we focus mostly on the case that $\mathbf p$ is the geometric distribution with parameter $p$, i.e.\ $p_j = p(1-p)^{j-1}$. We will denote this distribution by $\Geo(p)$. 
Our main interest is the asymptotic behavior of $K$ and $N$ as $p \to 0$.

Rare events greatly influence the behavior of $K$ for the $\Geo(p)$-rainstick process. If a segment far from 0 becomes wet, then, in order to form a block, rain must fall on all of the dry segments to its left.  When $p$ is small, this may take a long time. Meanwhile another more distant segment may be hit, and so on.  It turns out that dry sites persist behind the maximal wet site for a long time, and this persistence causes $K$ to grow like $e^{b/p}$ as $p \to 0$. Remarkably, we are able to calculate the constant $b$ exactly. 

\commentout{
It was initially unclear to us how $K$ should diverge to infinity as $p\to0$. 
A lower bounding process described in Lemma \ref{lem:lower1} halts once the point immediately to the left of the current maximum becomes wet. This process terminates with the rightmost wet point at distance $\approx p^{-2}$. We discovered this early on in the project, and believed for quite some time that the number of dry sites behind the front was $O(1)$. So, we thought $K$ grew like a polynomial in $p^{-1}$. 
}

The $\Geo(p)$-rainstick process is a particular case of a random allocation model known as the \emph{Bernoulli sieve}.  In that model, a ball is allocated to the $j^{th}$ bin with a probability $p_j$ given by random stick breaking,
\begin{align} 
p_j = (1- W_1)\cdots (1- W_{j-1}) W_i, \label{eq:Wi}
\end{align}
with $0<W_j <1$ and the $W_j$'s being i.i.d. See \cite{BS,BS2} for an overview. The non-random case $W_j \equiv p$ corresponds to the $\Geo(p)$-rainstick process. The literature focuses on different statistics for the counts of balls in various bins \cite{bs3,bs4}, such as the number of bins with exactly $r$ balls, or the location of the rightmost filled bin. To our knowledge, the relationship between the Bernoulli sieve and block sizes of random permutations has not been studied. 

The family of $\Geo(p)$-biased permutations is a standard example of \emph{regenerative permutations} which were recently introduced by Pitman and Tang in \cite{pitman}. Roughly speaking, regenerative permutations are infinite permutations $\Pi$ whose blocks $\Pi( [K_n] ) = [K_n]$ have i.i.d.~increments, $K_n-K_{n-1}$. 
%
\cite[Proposition 1.7]{pitman} establishes that a  $\Geo(p)$-biased permutation has $\E K < \infty$ for all $p$.  
A  result worth mentioning from Duchamps et.\ al.\ pertains to $W_i=\text{Uniform}(0,1)$ with $W_i$ from \eqref{eq:Wi}. \cite[Proposition 1.1]{pitman2} gives the surprisingly simple formulas $\E K = 3$ and $\text{var}(K) = 11.$
The proof relates renewal probabilities to linear combinations of the Riemann-zeta function. Pitman suggested that it would be interesting to extend this formula to $W_i = \text{Beta}(\theta)$, but so far the uniform case ($\theta =1$) is the only one that has been solved exactly [private correspondence]. Finding exact values of $\E K$ in our case $W_i \equiv p$ would also be interesting. We put some effort toward this, but had no success.

Blocks in infinite permutations are useful for studying the asymptotic behavior of finite  permutations. In \cite{naya}, Basu and Bhatnagar prove a central limit theorem for the longest increasing subsequences in Mallows($1-p$) distributed permutations for small $p$. See \cite{gnedin} for a definition and account of the limiting behavior of these permutations.
 To do this they decompose a related regenerative permutation, called a $\Geo(p)$-\emph{shifted permutation}, into blocks, and then concatenate the longest increasing sequences from each block.

Block formation in $\Geo(p)$-shifted permutations can be described via what we will call the \emph{paintstick process}. Imagine that on the $i$th step, a paintball falls on $X_i \sim \Geo(p)$. All of the integers left of $X_i$ are painted red to indicate that they must all be used before the block is formed. The integer at $X_i$ is removed, and the values to its right shift to the left by 1.  The moment $K'$ when there are no red sites is when the first block has been created. Unlike the rainstick process, there is no redundancy. Hence the time to form a block and the block size are both equal to $K'$. 

A key estimate in \cite{naya} is that $p\log \E K'\to \pi^2/6$ as $p \to 0$. So the expected block size grows exponentially as $p \to 0$. The proof is mainly concerned with the running maximum of geometric samples. Leader-election procedures are another setting in which this quantity is important \cite{geo1,geo2}. In the paintstick process, the painted sites always form an interval -- this makes the analysis of the paintstick much simpler than that of the rainstick process, in which one has to understand the structure of the dry sites.

\subsection*{Overview of results}
Our first three results  describe the size of the first block in $\Geo(p)$-biased permutations.  Let $b = \log 2 - \int_0^\infty  \log (1 - 2^{-e^{y}}) \, dy \approx 1.1524.$
%

\begin{thm}\label{thm:KU}
The block size $K$ is stochastically dominated by a $\Geo( p e^{- b/p})$ random variable. In particular, $\E  K \leq (1/p) e^{b/p}$ and for all $\epsilon > 0$
$$
\P[ K > e^{(b + \epsilon)/p}]\to 0, \quad \text{as} \;\; p \to 0.
$$ 
\end{thm}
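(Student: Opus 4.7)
My plan is to Poissonize the rainstick, reduce the stochastic dominance to a pointwise conditional lower bound, and extract the rate $b$ via Laplace's method on an explicit integral. Poissonize by taking $T_j \sim \mathrm{Exp}(p_j)$ independently for $j \geq 1$; sorting $(T_j)$ in increasing order reproduces the size-biased random permutation $\Pi$, so
\[
K \;=\; \inf\{k \geq 1 : A_k\}, \qquad A_k := \{\max_{j \leq k} T_j < \min_{j > k} T_j\}.
\]
Since $\{K \geq k\} \cap A_k = \{K = k\}$, the stochastic dominance $K \stprec \Geo(pe^{-b/p})$ will follow once I prove $\P(A_k \mid K \geq k) \geq pe^{-b/p}$ for every $k \geq 1$: iterating $\P(K \geq k+1) = \P(K \geq k)\bigl(1 - \P(A_k \mid K \geq k)\bigr)$ then gives $\P(K \geq k) \leq (1 - pe^{-b/p})^{k-1}$.

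The first step would be to compute the unconditional probability $\P(A_k)$. Conditioning on $\min_{j > k} T_j = t$ with density $(1-p)^k e^{-(1-p)^k t}$, using the independence of $(T_j)_{j \leq k}$ from $(T_j)_{j > k}$, and substituting $u = (1-p)^k t$ with the reindexing $m = k-j+1$ (so $p_j/(1-p)^k = p(1-p)^{-(k-j+1)}$) yields
\[
\P(A_k) \;=\; \int_0^\infty e^{-u} \prod_{m=1}^k \bigl(1 - e^{-pu(1-p)^{-m}}\bigr)\, du,
\]
which is non-decreasing in $k$ and converges to a limit $\phi(p)$. With $\alpha := -\log(1-p) \sim p$, the Riemann-sum approximation $\sum_{m \geq 1} \log(1 - e^{-v e^{m\alpha}}) \approx \tfrac{1}{\alpha}\int_v^\infty \log(1-e^{-z}) dz/z$ (via $z = v e^y$) combined with the outer substitution $v = pu$ converts $\phi(p)$ into a Laplace-type integral $(1/p)\int_0^\infty e^{g(v)/p}\,dv$ with $g(v) := -v + \int_v^\infty \log(1-e^{-z}) dz/z$. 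The critical point equation $g'(v^*) = 0$ simplifies to $\log(1-e^{-v^*}) = -v^*$, so $e^{-v^*} = 1/2$ and $v^* = \log 2$; this gives $g(\log 2) = -b$ (matching the theorem's form via $z = e^y \log 2$), and Laplace's method produces $\phi(p) \asymp e^{-b/p}/\sqrt p$, which exceeds the target $pe^{-b/p}$ by a factor of order $p^{-3/2}$.

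The final step, and the main obstacle, is to transfer this unconditional estimate into the conditional bound $\P(A_k \mid K \geq k) \geq pe^{-b/p}$. Conditioning on $\mu_k := \max_{j < k} T_j$, a direct computation using memorylessness gives
\[
\P\bigl(A_k \cap A_{k-1}^c \,\big|\, \mu_k\bigr) \;=\; (1 - e^{-p_k \mu_k})\, e^{-(1-p)^k \mu_k},
\]
with analogous closed-form expressions for conditioning on non-block events at earlier $k'$. Extending to the full event $\{K \geq k\} = \bigcap_{k' < k} A_{k'}^c$ requires iterating these formulas, with support from the Pitman--Tang regenerative property of the underlying exponential family --- the rescaled tail $((1-p)^{k-1} T_{k+j-1})_{j \geq 1}$ has the same joint distribution as $(T_j)_{j \geq 1}$ --- so that conditioning on no block below $k$ can be matched with a similar condition on a rescaled copy of the process. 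The $p^{-3/2}$ slack between $\phi(p)$ and $pe^{-b/p}$ provides substantial room to absorb the correction factors introduced by this iterated conditioning, but carrying out the propagation carefully while preserving the bound is the core technical content of the proof.
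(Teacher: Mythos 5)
There is a genuine gap: you have reduced the stochastic dominance to the conditional bound $\P(A_k \mid K \geq k) \geq pe^{-b/p}$, and you have correctly evaluated the \emph{unconditional} quantity $\P(A_k)$ via Laplace's method (recovering the exact constant $b$), but you never establish that conditioning on $\{K\geq k\}$ cannot hurt by more than the available $p^{-3/2}$ slack. This is precisely the hard part. The event $\{K\geq k\} = \bigcap_{k'<k}A_{k'}^c$ imposes a complicated dependence among \emph{all} of $T_1,\dots,T_k$ as well as the minimum over the tail, and it is not evident (and certainly not proved) that the correction factors accumulated over $k-1$ iterated conditionings stay below any polynomial in $1/p$ uniformly in $k$. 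The regenerative identity $((1-p)^{k-1}T_{k+j-1})_{j\ge 1}\stackrel{d}{=}(T_j)_{j\ge 1}$ that you invoke does not by itself disentangle this, because $A_k$ and the conditioning event both involve the prefix $T_1,\dots,T_{k-1}$ as well as the tail. So what you have is an asymptotic evaluation of $\P(A_k)$ together with a reduction to an unproved conditional inequality; the theorem would not yet follow. (Also, a small slip: since each extra factor in $\prod_{m=1}^k(1-e^{-pu(1-p)^{-m}})$ is $<1$, the quantity $\P(A_k)$ is non-\emph{increasing} in $k$, not non-decreasing; this does not affect the $k\to\infty$ limit but should be corrected.)

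The paper's proof is built exactly to avoid this conditioning problem. It embeds the process on all of $\mathbb{Z}$ and introduces a \emph{forgetful} coupling: whenever the running maximum $M_t$ advances, every site behind the new maximum is reset to dry. This dominates the true number of dry sites, so it bounds $K$ from above, and more importantly it manufactures a renewal structure: each ``attempt'' to fill everything behind the frontier is i.i.d., succeeds with probability $\geq e^{-b/p}$ (Lemma~\ref{lem:prob2}, proved by the same Riemann-sum/Laplace calculation you carry out, but one-sided), and a failed attempt advances $M$ by an independent $\Geo(p)$ jump. Compounding gives $K\preceq\sum_{i=1}^{\Geo(e^{-b/p})}\Geo(p)\stackrel{d}{=}\Geo(pe^{-b/p})$ directly, with no conditional probabilities to control. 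If you want to rescue your route, the most promising fix is to replace the conditional bound by exactly this kind of monotone coupling, rather than trying to propagate estimates through $\bigcap_{k'<k}A_{k'}^c$.
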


The proof of this bound will be described in terms of the $\Geo(p)$-rainstick process, and the idea of the proof may be summarized as follows: We first extend the rainstick process to all of the integers, with rain falling on $j \in \mathbb Z$ in continuous time at rate $(1-p)^{j-1}p$. 
Let $M_t$ be rightmost wet point at time $t$. To obtain our upper bound we make the drastic modification that, after the maximum $M_t$ increases, all of the sites in $(-\infty,M_t)$ are reset to being dry. Let $G$ be the event that all sites $j < M_t$ become wet before the boundary moves again. We then show that $\P[G] \geq e^{-b/p}$. Thus, the number of tries we need to have the event $G$ occur is stochastically dominated by a $\Geo(e^{-b/p})$ random variable.  The factor of $p$ in the stochastic bound is because the boundary increases by a $\Geo(p)$-distributed amount at each increase. 

Given the drastic simplification in the last paragraph, it is surprising that the constant $b$ is sharp, as shown by this lower bound:

\begin{thm}\label{thm:KL}
For all $\epsilon > 0$, $\P[ K >  e^{(b - \epsilon)/p}] \to 1$ and hence $p \log K \to b$ in probability, as $p \to 0$.
\end{thm}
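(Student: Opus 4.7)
The plan is to analyze $\P[K \leq m^*]$ for $m^* = e^{(b-\epsilon)/p}$ directly in the continuous-time setting from the upper-bound proof. Let $T_j \sim \mathrm{Exp}(p(1-p)^{j-1})$ be independent, so that $K$ is the smallest $k$ for which
\[
C_k := \bigl\{ \max_{j \leq k} T_j < \min_{j > k} T_j\bigr\}
\]
occurs (that is, $k$ is a block boundary). A union bound gives
\[
\P[K \leq m^*] \leq \sum_{k=1}^{m^*} \P[C_k].
\]
Conditioning on $\min_{j > k} T_j \sim \mathrm{Exp}((1-p)^k)$ and using independence, one obtains the integral representation
\[
\P[C_k] = \int_0^\infty e^{-u} \prod_{i=1}^k \bigl(1 - e^{-pu(1-p)^{-i}}\bigr) du,
\]
which is essentially the same expression used to analyze $\P[G]$ in the upper bound. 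Note that $\P[C_k]$ is decreasing in $k$.

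I would split the sum at a threshold $k_0 = \lfloor 1/p \rfloor$. For $k \leq k_0$, the elementary bound $1 - e^{-x} \leq x$ gives $\P[C_k] \leq p^k k! (1-p)^{-k(k+1)/2}$; Stirling shows the dominant contribution is at $k=1$ with value $p$, and the entire sum over $k \leq k_0$ is $O(p) + o(1)$. For $k > k_0$, the plan is to prove a Laplace-type bound $\P[C_k] \leq C p^{-1/2} e^{-b/p}$. The log-integrand $g_k(u) := -u + \sum_{i=1}^k \log(1 - e^{-pu(1-p)^{-i}})$ has its unique maximum near $u^* = (\log 2)/p$, where the Riemann-sum approximation
\[
\sum_{i=1}^\infty \log\bigl(1 - 2^{-e^{pi}}\bigr) \approx \frac{1}{p}\int_0^\infty \log\bigl(1 - 2^{-e^y}\bigr) dy
\]
combines with $-u^* = -(\log 2)/p$ to produce exactly $-b/p$ from the definition of $b$, and the second derivative $g_k''(u^*) = -\Theta(p)$ contributes the $p^{-1/2}$ Gaussian prefactor. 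Summing gives
\[
\sum_{k=k_0+1}^{m^*} \P[C_k] \leq m^* \cdot C p^{-1/2} e^{-b/p} = C p^{-1/2} e^{-\epsilon/p} \to 0.
\]

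Combining both pieces, $\sum_{k=1}^{m^*}\P[C_k] \to 0$ as $p \to 0$, so $\P[K \leq e^{(b-\epsilon)/p}] \to 0$, i.e. $\P[K > e^{(b-\epsilon)/p}] \to 1$. Together with Theorem~\ref{thm:KU}, this gives convergence in probability $p \log K \to b$.

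The main obstacle will be making the Laplace asymptotic for $\P[C_k]$ rigorous and uniform in $k > k_0$, in particular controlling the Riemann-sum approximation in the exponent (showing it is accurate to $o(1/p)$) and verifying that the Gaussian width around $u^*$ really produces the $p^{-1/2}$ prefactor. A subsidiary point requiring care is the intermediate range $k \sim 1/p$, where one must check that the small-$k$ bound and the large-$k$ Laplace bound together still sum to $o(1)$.
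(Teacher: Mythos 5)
Your setup is substantively the same as the paper's. Defining $T_j\sim\mathrm{Exp}(p(1-p)^{j-1})$ and the event $C_k$, using $\{K=k\}\subseteq C_k$ and a union bound, and the integral formula
\[
\P[C_k]=\int_0^\infty e^{-u}\prod_{i=1}^k\bigl(1-e^{-pu(1-p)^{-i}}\bigr)\,du
\]
are all correct; after the change of variable $u=t(1-p)/p$, this is precisely the paper's estimate $\P[K=k]\le\tfrac{1-p}{p}\int_0^\infty\P[G_{k,t}]\,dt$ built from the event $G_{k,t}$ and the product formula \eqref{pdoubleprod}. Your elementary bound $\P[C_k]\le p^kk!(1-p)^{-k(k+1)/2}$ for $k\le 1/p$ is also correct and gives $\sum_{k\le 1/p}\P[C_k]=O(p)$; this plays the role of the paper's Lemma~\ref{lem:lower1}, which is proved differently (via a sub-process that stops when the site just left of the maximum becomes wet) but serves the same purpose.

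The genuine gap is the claimed uniform estimate $\P[C_k]\le Cp^{-1/2}e^{-b/p}$ for all $k>k_0=\lfloor 1/p\rfloor$. That bound is \emph{false} for $k$ of order $1/p$. In your own Laplace setup the maximum of $g_k$ does occur at some $u^*=v^*/p$, but $g_k(u^*)=-u^*+\sum_{i=1}^k\log(1-e^{-pu^*(1-p)^{-i}})$ involves the truncated sum over $i\le k$, not the full sum; you silently replaced it by $\sum_{i=1}^\infty$. For $k\sim 1/p$ the missing tail $\sum_{i>k}\log(1-2^{-(1-p)^{-i}})$ is itself of order $-\tfrac{1}{p}\int_{pk}^\infty\log(1-2^{-e^y})\,dy$, a \emph{negative} $\Theta(1/p)$ quantity; dropping it makes $g_k(u^*)$ strictly larger than $-b/p$ by a $\Theta(1/p)$ amount (and also shifts $v^*$ away from $\log 2$). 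Equivalently: $\P[C_k]$ is decreasing in $k$ and its $k\to\infty$ limit is where the value $\approx e^{-b/p}$ is attained; at $k\approx 1/p$ you are nowhere near that limit, and indeed $\P[C_{\lceil 1/p\rceil}]\approx e^{-(b-c)/p}$ for a fixed $c>0$, which violates your claimed bound by a factor $e^{c/p}$. This is precisely why the paper takes $k_0$ of the much larger size $2p^{-3/2}$ in Lemma~\ref{lem:lower1} and proves the uniform estimate (Proposition~\ref{prop:lower2}) only for $k\ge 2p^{-3/2}$, where the truncation error in the Riemann-sum argument is genuinely $o(1/p)$ uniformly. Your argument can be repaired along the same lines: split $[1/p,m^*]$ into $[1/p,A/p]$ (only $O(1/p)$ terms, each at most $\P[C_{\lceil 1/p\rceil}]\approx e^{-(b-c)/p}$ with $b-c>0$, so the sub-sum still vanishes) and $[A/p,m^*]$ (where for $A$ large depending on $\epsilon$ the exponent is within $\epsilon/2$ of $-b/p$), but as written the step is incorrect.

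Beyond that, the heart of the matter --- the rigorous, $k$-uniform Laplace estimate for $\int e^{g_k}$, including control of the contribution far from $u^*$ --- is only sketched and flagged as an ``obstacle,'' whereas it is exactly where the paper's Lemmas~\ref{lem:lower3} and~\ref{lem:lower4} do the work (bounding the envelope $\max_j\P[G_{j,t}]$ uniformly and proving polynomial decay of the ratio $\P[G_{k,t}]/\P[G_{j(t),t}]$ in $t$). You should not expect that gap to be merely routine; filling it would require essentially the content of those two lemmas.
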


\noindent This lower bound is more difficult to prove than the upper bound of Theorem \ref{thm:KU}. We do so by getting an upper bound on $\P[K=k]$ by estimating the probability of the event 
$$
G_{k,t} = \{\text{all sites $\le k$ are wet and all sites $>k$ are dry at time $t$}\}.
$$ 
One can write an explicit formula for $\P[G_{k,t}]$, see \eqref{pdoubleprod}. Bounding $\int_0^\infty \P[G_{k,t}] \, dt$ then
leads to the desired result.

Since the maximum of $n$ independent $\Geo(p)$ random variables grows logarithmically in $n$, it follows easily from these two theorems that $N$ grows as a double exponential:

\begin{thm} \label{cor:N}
$p \log \log N \to b$ in probability as $p \to 0$.
%
\end{thm}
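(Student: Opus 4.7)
The plan is to sandwich $N$ between hitting times of specific sites of the i.i.d.\ $\Geo(p)$ sequence $\mathbf X$. Write $T_j := \inf\{i \geq 1 : X_i = j\}$ for the first hitting time of site $j$; note that each $T_j$ has marginal distribution $\Geo(p(1-p)^{j-1})$ regardless of the behavior of $K$. The key identity is
\begin{equation*}
N \;=\; \max_{j \in [1,K]} T_j,
\end{equation*}
since $\{X_1, \ldots, X_N\} = [1, K]$ forces $T_j \leq N$ for every $j \in [1, K]$, while the minimality of $N$ gives $X_N$ as the last new element of $[1, K]$ to appear, yielding equality for $j = X_N$.

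For the upper bound, fix $\eps > 0$, set $k_1 = \lceil e^{(b+\eps)/p} \rceil$ and $m_1 = \lceil e^{e^{(b+2\eps)/p}} \rceil$. Theorem \ref{thm:KU} gives $\P(K > k_1) \to 0$, and on $\{K \leq k_1\}$ the identity above with a union bound yields
\begin{equation*}
\P(N \geq m_1,\, K \leq k_1) \;\leq\; \sum_{j=1}^{k_1} \P(T_j \geq m_1) \;\leq\; k_1 \exp\!\bigl(-(m_1-1)\, p\, (1-p)^{k_1-1}\bigr).
\end{equation*}
A scale check using $(1-p)^{k_1-1} \geq e^{-(k_1-1)p(1+p)}$ confirms the exponent dominates $\log k_1$ as $p \to 0$, because $e^{(b+2\eps)/p}$ beats $p\, e^{(b+\eps)/p}$ by the factor $e^{\eps/p}/p \to \infty$. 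Hence $p \log \log N < b + 2\eps$ with high probability.

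For the lower bound, set $k_0 = \lfloor e^{(b-\eps)/p} \rfloor$ and $m_0 = \lfloor e^{e^{(b-2\eps)/p}} \rfloor$. Theorem \ref{thm:KL} gives $\P(K < k_0) \to 0$, and on $\{K \geq k_0\}$ the identity forces $N \geq T_{k_0}$. The elementary geometric bound gives
\begin{equation*}
\P(T_{k_0} \leq m_0) \;\leq\; m_0 \cdot p(1-p)^{k_0-1} \;\leq\; m_0\, p\, e^{-(k_0-1)p} \;\longrightarrow\; 0,
\end{equation*}
the last step because $p\, e^{(b-\eps)/p}$ dominates $e^{(b-2\eps)/p}$. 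Thus $p \log \log N > b - 2\eps$ with high probability, and letting $\eps \downarrow 0$ yields the theorem. The argument is essentially elementary once the identity $N = \max_{j \leq K} T_j$ is in hand; the only real book-keeping is tracking how the \emph{double}-exponential scale for $N$ arises from the \emph{single}-exponential scale for $K$ via the expected hitting time $\asymp p^{-1} e^{pK}$ of the maximal wet site, so the main obstacle is cleanly decoupling the randomness of $K$ from the marginal tails of the $T_j$'s, which is exactly what the ``split into $\{K \leq k_1\}$ vs.\ its complement'' step accomplishes.
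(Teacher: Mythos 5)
Your proof is correct. The high-level strategy is the same as the paper's --- transfer the single-exponential scale of $K$ from Theorems \ref{thm:KU} and \ref{thm:KL} to the double-exponential scale of $N$ --- but your execution is genuinely different and arguably tighter: you stay in the discrete-time sequence $\mathbf X$ and use the pathwise hitting-time identity $N = \max_{j \in [1,K]} T_j$ with each $T_j \sim \Geo(p(1-p)^{j-1})$, decoupling the random threshold $K$ from the marginal tails of the $T_j$'s by splitting on $\{K \leq k_1\}$ (where $N \leq \max_{j \leq k_1} T_j$, so a deterministic union bound applies) and on $\{K \geq k_0\}$ (where $N \geq T_{k_0}$ and a single-term geometric bound suffices). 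The paper instead runs the same calculation for the Poissonized continuous-time process via the rightmost wet site $M_t$, and as a result has to invoke a ``standard depoissonization'' step which it cites but does not carry out. Your discrete-time version sidesteps depoissonization entirely and is self-contained, at no cost in length; the rest of the bookkeeping (the $e^{\eps/p}/p \to \infty$ and $p\,e^{\eps/p} \to \infty$ scale checks) is correct.
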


\noindent   
When $p=0.1$, $\exp(e^{b/p}) \approx 10^{42,000}$, so it seems unlikely that one could predict the limiting behavior of $K$ by simulation.

Since the time grows doubly exponentially fast for the geometric as $p\to 0$, it should not be surprising that when the tail of the distribution $\vec p$
is stretched exponential there is positive probability that the process never terminates.  

\begin{thm} \label{prop:stretch}
Fix $\alpha \in (0,1)$ and let $K_\alpha$ be the size of the first block in a $\vec p$-biased permutation where $\P[X_1 \geq k] =  C_\alpha e^{-k^\alpha}$ with $C_\alpha$ a normalizing constant. It holds that
$$
\P[K_\alpha = \infty ] >0.
$$ 	
\end{thm}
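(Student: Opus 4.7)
The plan is to work in the continuous-time rainstick picture used for Theorems \ref{thm:KU} and \ref{thm:KL}: let $\tau_j$ be independent with $\tau_j \sim \mathrm{Exp}(\lambda_j)$ and $\lambda_j := p_j$. Block formation at size $k$ corresponds exactly to $A_k := \{\max_{j \leq k} \tau_j < \min_{j > k} \tau_j\}$, so $\{K_\alpha = \infty\} = \bigcap_{k \geq 1} A_k^c$. The idea is to condition on the first wetting occurring at a very large site, and then control $\sum_k \P[A_k \mid \cdot]$ in that regime.

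First I would establish a polynomial upper bound $\P[A_k] \leq C_s \, k^{s(\alpha - 1)}$ for every integer $s \geq 1$ and $k$ large. Starting from the representation
$$
\P[A_k] = \Lambda_k \int_0^\infty e^{-\Lambda_k t} \prod_{j=1}^k(1 - e^{-\lambda_j t})\,dt, \qquad \Lambda_k := \sum_{j > k} \lambda_j,
$$
keeping only the last $s$ factors of the product (each other factor is at most $1$) and applying $1 - e^{-x} \leq x$ gives
$$
\P[A_k] \leq s!\, \Lambda_k^{-s} \prod_{j=k-s+1}^k \lambda_j.
$$
For the stretch exponential one has $\lambda_j \asymp \alpha j^{\alpha - 1} e^{-j^\alpha}$ and $\Lambda_k \asymp e^{-k^\alpha}$, so $\lambda_j/\Lambda_k \asymp \alpha k^{\alpha - 1}$ whenever $|j - k|$ stays bounded. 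This yields the claimed bound.

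Next I would condition on $\{Y_1 = m\}$, the event that the first wetted site is $m$ (equivalently, $\tau_m = \min_j \tau_j$). This has probability $p_m > 0$, and by the memoryless property the shifted residuals $\tilde\tau_j := \tau_j - \tau_m$ for $j \neq m$ are independent $\mathrm{Exp}(\lambda_j)$. For $k \geq m$, the event $A_k$ is equivalent on $\{Y_1 = m\}$ to the analogous event $\{\max_{j \in [1, k] \setminus \{m\}} \tilde\tau_j < \min_{j > k} \tilde\tau_j\}$ for the reduced system, and the same argument gives $\P[A_k \mid Y_1 = m] \leq C_s \, k^{s(\alpha - 1)}$. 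For $k < m$, the events $\{Y_1 = m\}$ and $A_k$ are disjoint, since $Y_1 = m > k$ forces $\tau_m < \tau_j$ for all $j \leq k$, contradicting the inequality $\max_{j \leq k} \tau_j < \tau_m$ required by $A_k$.

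Choosing an integer $s > 1/(1-\alpha)$ gives $s(\alpha - 1) + 1 < 0$, so $\sum_{k \geq m} k^{s(\alpha - 1)} \to 0$ as $m \to \infty$. Hence
$$
\P[K_\alpha < \infty \mid Y_1 = m] \leq \sum_{k \geq m} \P[A_k \mid Y_1 = m] \xrightarrow{m \to \infty} 0.
$$
For $m$ large enough this sum is less than $1/2$, so $\P[K_\alpha = \infty \mid Y_1 = m] > 1/2$, and therefore
$$
\P[K_\alpha = \infty] \geq \P[Y_1 = m] \cdot \P[K_\alpha = \infty \mid Y_1 = m] \geq p_m/2 > 0.
$$

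The main obstacle is the polynomial bound on $\P[A_k]$. The ability to take $s$ arbitrarily large, and thereby make the tail sum over $k \geq m$ small, relies crucially on $\alpha < 1$: it ensures $\lambda_j/\Lambda_k \to 0$ for $j$ near $k$, so that each additional factor retained in the product contributes an extra polynomial factor $k^{\alpha - 1}$. In the geometric setting this ratio instead stays bounded below, and the approach correctly fails, consistent with $K$ being almost surely finite there.
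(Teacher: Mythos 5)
Your proof is correct and takes a genuinely different route from the paper's. The paper tracks a dynamic auxiliary process: it watches the number $\bar H_t$ of dry sites in a window $[M_t - \ell(t), M_t)$ of width $\ell(t) = M_t^{1-\alpha}/2$ behind the front, shows that after running time at rate $e^{M_t^\alpha}$ the maximum makes big jumps (of size $> 2\ell(t)$) at an asymptotically constant rate $\approx e^{-\alpha}$, and argues that each such jump resets $\bar H$ to its maximum before it has time to hit zero, with failure probability $\sigma_k \leq (1/2)^{\epsilon k^{1-\alpha}/2}$; summability of $\sigma_k$ gives the result via Borel--Cantelli. Your argument is static and more combinatorial: you write down the exact integral
$\P[A_k] = \Lambda_k \int_0^\infty e^{-\Lambda_k t}\prod_{j\leq k}(1-e^{-\lambda_j t})\,dt$
for the probability a $k$-block ever forms (the analogue of the formula \eqref{pdoubleprod} used in the lower bound for the geometric case), keep only the last $s$ factors, bound $1-e^{-x}\leq x$, and integrate via the gamma function to get $\P[A_k] \leq s!\,\Lambda_k^{-s}\prod_{j=k-s+1}^k \lambda_j \asymp C_s k^{s(\alpha-1)}$. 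Since $\alpha<1$, you can pick $s$ large enough to make the exponent $<-1$ and kill the tail sum, and then conditioning on $\{Y_1=m\}$ for $m$ large pushes the sum below $1$. The conditioning step is handled correctly via memorylessness, and the observation that $A_k$ and $\{Y_1=m\}$ are disjoint when $k<m$ is the right way to cut off the front of the sum. What you gain is a fully quantitative proof that avoids the informal $\approx$'s in the paper's rate calculations and sidesteps the need to define and analyze the auxiliary process $\bar H_t$; you also get, as a by-product, a concrete polynomial decay estimate $\P[A_k]\lesssim k^{s(\alpha-1)}$ for any $s$, which the paper's proof does not make explicit. Your closing remark about why the approach fails for $\alpha=1$ (the ratio $\lambda_j/\Lambda_k$ stays bounded below) is correct and identifies precisely where $\alpha<1$ enters.
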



\section{Proofs}

We adopt a continuous time perspective for the rate at which raindrops fall. The advantage is that, by Poisson thinning, raindrops arrive at each integer as independent Poisson processes. Let $M_t$ denote the maximum point in the process at time $t$ with $M_0 = X_1$. We will often rescale time so raindrops fall at unit intensity at $M_t$. 
We will refer to integers left of $M_t$ yet to be covered as \emph{dry sites}, and covered integers as \emph{wet sites}. Accordingly, let $H_t$ be the number of dry sites in $[1,M_t).$ Set $\eta = \inf\{ t \geq 0 \colon H_t = 0\}$ so that $K = M_\eta.$

\subsection{Proof of Theorem  \ref{thm:KU}} \label{sec:KU}

Given $M_t = m$, we may rescale time so that raindrops arrive at $m$ at rate $1$, and raindrops arrive at site $m + \ell$ at rate
\begin{align}(1 - p)^\ell,\quad  \text{ for } \ell \in [1,\infty).\label{eq:l}
\end{align}
After this rescaling, a new maximal wet site arrives at rate
\begin{align}\textstyle{\sum_{\ell \geq 1} (1 - p)^{\ell} = p^{-1}(1-p)}. \label{eq:max}	
\end{align}

If we allow for dry sites all the way to $-\infty$, then we can extend the rate in \eqref{eq:l} to all $\ell \in \mathbb Z$. Observe that by independence we have $M_t$ is unchanged when we allow for rain to arrive at the non-positive integers as well. To bound $H_t$ from above, we consider a ``forgetful" version of this process: whenever $M_t$ increases we reset all of the sites in $(-\infty, M_t)$ to be dry. This modification has no effect on $M_t$. Let $\hat H_t$ be the number of dry sites in $(-\infty, M_t)$ in this forgetful process.  The natural coupling ensures that $H_t \leq \hat H_t$, so it takes longer for $\hat H_t$ to reach 0 than its counterpart $H_t$.  Thus, $\hat \eta := \inf\{ t \colon \hat H_t = 0\} \succeq \eta.$ It follows that 
\begin{align}M_{\hat \eta} \succeq M_\eta = K. \label{eq:hatM}
\end{align}
We now show that the probability that all of the sites behind $M_t$ become wet before $M_t$ increases is at least $e^{- b /p}$.

\begin{lemma}\label{lem:prob2}
Let $t_1$ to be the first time that $M_0$ is exceeded. For all $p \in (0,1)$ we have
\begin{align}
\P[ \hat \eta <t_1 ] \ge  e^{-b/p}.
\label{plb}
\end{align}
\end{lemma}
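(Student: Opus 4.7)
The plan is to condition on $t_1$, use Poisson independence to reduce $\P[\hat\eta < t_1]$ to the expectation of an explicit infinite product, lower bound that product by an integral comparison, and then extract the constant $b$ by truncating the expectation at $t_1 \geq \log 2$.

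First I would observe that, because raindrops at distinct integers arrive as independent Poisson processes, the first arrival time $Y_j$ at site $M_0 - j$ is exponentially distributed with rate $(1-p)^{-j}$ and the $Y_j$'s are jointly independent of $t_1$, which itself is exponential with rate $(1-p)/p$. In the forgetful process $\{\hat\eta < t_1\} = \{\max_{j \geq 1} Y_j < t_1\}$, so conditioning on $t_1$ yields
\[
\P[\hat\eta < t_1] = \E\prod_{j=1}^\infty \bigl(1 - e^{-t_1(1-p)^{-j}}\bigr).
\]

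Next, setting $q := -\log(1-p)$, the map $x \mapsto \log(1 - e^{-te^{qx}})$ is non-decreasing and non-positive, so the integral-test comparison $\sum_{j=1}^\infty f(j) \geq \int_0^\infty f(x)\,dx$ together with the substitution $y = qx$ gives
\[
\prod_{j=1}^\infty \bigl(1 - e^{-t(1-p)^{-j}}\bigr) \geq e^{-A(t)/q}, \qquad A(t) := -\int_0^\infty \log\bigl(1 - e^{-t e^y}\bigr)\, dy.
\]
A direct check from the definition of $b$ gives $A(\log 2) = b - \log 2$, and $A$ is strictly decreasing on $(0,\infty)$, so $e^{-A/q}$ is increasing. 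Taking expectations and truncating at $t_1 \geq \log 2$,
\[
\P[\hat\eta < t_1] \geq \E\bigl[e^{-A(t_1)/q}\bigr] \geq e^{-A(\log 2)/q}\, \P[t_1 \geq \log 2] = \exp\!\left(-\frac{b - \log 2}{q} - \frac{(1-p)\log 2}{p}\right).
\]
Since $q = -\log(1-p) \geq p$ and $b > \log 2$ (as $2^{-e^y} \in (0,1/2]$ makes the defining integral strictly negative), the exponent is bounded above by $\log 2 - b/p$, which yields $\P[\hat\eta < t_1] \geq 2 e^{-b/p} > e^{-b/p}$.

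The main difficulty is pinning down the correct truncation threshold, since the clean constant $b$ emerges only at $t = \log 2$: this is the unique solution of $-\log(1-e^{-t}) = t$ and hence the first-order condition for the minimizer of $t \mapsto t + A(t)$ implicit in the definition of $b$. Any other threshold would leave an additive $O(1/p)$ residual in the exponent that would ruin the constant. The sum-to-integral comparison must also be oriented correctly, exploiting monotonicity of the summand in the right direction to produce a usable lower bound rather than an upper bound.
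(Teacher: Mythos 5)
Your proof is correct and takes essentially the same route as the paper: condition on $t_1$, convert the infinite product over dry sites into a sum, bound that sum by the corresponding integral using monotonicity, use $-\log(1-p)\geq p$ to replace $(1-p)^{-j}$ by $e^{pj}$, and truncate at the threshold $t_1\geq\log 2$ to extract the constant $b$. The only difference is cosmetic ordering — you bound the product by $e^{-A(t)/q}$ first and then restrict to $\{t_1\geq\log 2\}$, whereas the paper restricts the integral over $s$ first and then freezes each factor at $s=\log 2$ — and your version even yields the marginally sharper constant $2e^{-b/p}$ rather than $e^{-b/p}$, a factor the paper discards in the same step.
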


\begin{proof} 
Because of the time-scaling described at \eqref{eq:l} and \eqref{eq:max}, the arrival time $t_1$ is exponential with rate $p^{-1}(1 - p)$. Conditioning on $t_1$ gives
$$
\P[ \hat \eta <t_1 ] = \frac{1-p}{p} \int_0^\infty e^{-s(1-p)/p} \prod_{\ell=1}^\infty \left( 1 - \exp(-s(1-p)^{-\ell}) \right) \, ds  .
$$
Let $\alpha = \log 2$.  The right side may be bounded below by restricting the domain of integration to $[\alpha, \infty)$. Over this domain, $1 - \exp(-s(1-p)^{-\ell}) \geq 1 - \exp(-\alpha(1-p)^{-\ell})$, so
$$
\P[ \hat \eta <t_1 ]  \geq e^{-\alpha/p} \prod_{\ell=1}^\infty ( 1 - \exp(-\alpha(1-p)^{-\ell}).
$$ 
Taking the log of the infinite product and changing variables $l=x/p$ we have
$$
\log \P[ \hat \eta <t_1 ] \geq - \frac{\alpha}{p}+  \frac{1}{p} \cdot p \sum_{x \in p\mathbb N} \log (1- \exp(-\alpha (1-p)^{-x/p}) ).
$$
The fact that $1-p \le e^{-p}$ implies $(1-p)^{-x/p} \ge e^{x}$. This gives $\exp(-\alpha (1-p)^{-x/p}) \le \exp(-\alpha e^{x})$. Hence,
$$
\log \P[ \hat \eta <t_1 ] \geq  - \frac{\alpha}{p} + \frac{1}{p}  \sum_{x \in p\mathbb N} p \log (1- \exp(-\alpha e^{-x}) ).
$$
The last sum is a Riemann sum that evaluates the function at the right endpoint of each interval. Since the integrand is increasing, the sum is larger than
$$
\int_0^\infty  \log (1 - \exp(-\alpha e^{y})) \, dy.
$$
Therefore,
$$
\log \P[ \hat \eta < t_1 ] \ge  -\frac{\log 2}{p} + \frac{1}{p} \int_0^\infty  \log (1 - \exp(-(\log 2) e^{y})) \, dy,
$$
which establishes the lemma.
\end{proof}

We can now quickly deduce Theorem \ref{thm:KU}. 

\begin{proof}[Proof of Theorem \ref{thm:KU}]
Recall that \eqref{eq:hatM} gives $K \preceq  M_{\hat \eta}.$ By Lemma \ref{lem:prob2} we have all of the dry sites become wet with probability at least $e^{-b/p}$. However, if the maximum increases, it does so by a $\Geo(p)$ distributed amount. When this occurs we reset all of the sites behind the new maximum to be dry, thus restarting the dynamics. It follows that
$$M_{\hat \eta} \preceq \sum_{i=1}^{\Geo(e^{- b/p}) } \Geo(p) \overset{d} = \Geo( p e^{- b /p} ).$$
The terms are independent because the amount the maximum increases by $\Geo(p)$ does not depend on how long it takes to cover all of the dry sites behind it. This is because we reset all sites $(-\infty, M_n)$ to be dry each time the maximum increases. 
 In light of \eqref{eq:hatM} we then have the claimed dominance
$K  \preceq  M_{\hat \eta} \preceq \Geo( p e^{- b/p}).$
\end{proof}

\subsection{Proof of Theorem \ref{thm:KL}} \label{sec:LU}

We need to show that for any $\epsilon > 0$, $\P[K \leq e^{(b - \epsilon)/p}] \to 0$ as $p \to 0$.  We consider separately the possibility of small and large realizations of $K$. First we show that small values of $K$ are unlikely.   

\begin{lemma} \label{lem:lower1}
$\P[K > 2/p^{3/2}] \to 1$ as $p \to 0$.
\end{lemma}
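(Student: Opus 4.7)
The plan is to union bound $\P[K \le 2/p^{3/2}]$ over the possible values $k$ of the block size and estimate each summand via the rainstick formulation. If $K = k$, then at some moment the wet set must equal $[1,k]$: all of $[1,k]$ is rained on before any site above $k$ is. Writing $T_j \sim \mathrm{Exp}(p(1-p)^{j-1})$ for the first arrival time at site $j$, this event is $\{\max_{j \le k} T_j < \min_{j > k} T_j\}$. Conditioning on $\min_{j > k} T_j$ (which is exponential with rate $(1-p)^k$) and substituting $s = (1-p)^k t$ yields
\[
\P[K = k] \le \int_0^\infty e^{-s} \prod_{\ell=1}^k \bigl(1 - e^{-p(1-p)^{-\ell} s}\bigr)\, ds.
\]

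The main estimate comes from bounding the first $r$ factors via $1 - e^{-x} \le x$ and the rest by $1$: for any $1 \le r \le k$,
\[
\P[K = k] \le r!\, p^r (1-p)^{-r(r+1)/2}.
\]
I would then split the union bound at $r^* = \lceil 1/\sqrt p\rceil$. For $k \le r^*$, even the $r = 1$ case gives $\P[K = k] \le p$, so this range contributes at most $r^* p = O(\sqrt p)$. For $k > r^*$, set $r = r^*$: the calibration $p (r^*)^2 = O(1)$ implies $(1-p)^{-r^*(r^*+1)/2} = O(1)$, while Stirling yields $r^*!\, p^{r^*} \le (r^* p/e)^{r^*} \sqrt{2\pi r^*}\, e^{1/(12 r^*)}$, which decays faster than any polynomial in $p$. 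Multiplying by the at most $2/p^{3/2}$ terms in this range still gives $o(1)$.

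The main technical point is the trade-off in choosing $r^*$: it must be small enough that $r^* p \to 0$ (to control the small-$k$ range) and large enough that $(r^* p/e)^{r^*}$ beats the polynomial factor $2/p^{3/2}$ (to control the large-$k$ range). The choice $r^* \sim 1/\sqrt p$ achieves both, and combining the two bounds gives $\P[K \le 2/p^{3/2}] = O(\sqrt p) \to 0$.
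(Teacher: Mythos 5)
Your argument is correct and takes a genuinely different route from the paper. The paper's proof constructs a stopped (lower-bounding) process: run the rainstick until the site immediately left of the running maximum $M_t$ gets wet, which happens at each jump with probability $q \sim p$, and which advances the maximum by $1 + \Geo(p)$ each time it survives, so that $K$ stochastically dominates $\ind{X_1>1}\sum_{k=1}^{\Geo(q)}(1+\Geo(p))$; the claim then follows from a one-line tail estimate for a geometric with parameter $\approx p^2$. You instead bound $\P[K=k]$ directly via the inclusion $\{K=k\}\subseteq\{\max_{j\le k}T_j<\min_{j>k}T_j\}$, integrate out the exponential race, use $1-e^{-x}\le x$ on the $r$ hardest sites to get $\P[K=k]\le r!\,p^r(1-p)^{-r(r+1)/2}$, and then optimize $r$ in two regimes across the union bound. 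Both proofs land at the same $p^{-3/2}$ threshold; the paper's is shorter and packages the lower bound as a clean stochastic domination, while yours yields an explicit per-$k$ tail bound that would in principle extend to sharper thresholds (indeed any $k = o(p^{-2})$, since the union bound is not the bottleneck). One small imprecision: with $r=1$ the bound is $p/(1-p)$ rather than $p$, but this does not affect the $O(\sqrt p)$ conclusion for the small-$k$ range.
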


\begin{proof}
 We obtain a lower bound on $K$ if we halt the process the first time the point immediately left of $M_t$ is filled. To get started the first raindrop must land beyond 1. This happens with probability $1-p$. After this, the maximum will jump by a $1+\Geo(p)$ distributed amount a $\Geo(q)$-distributed number of times. Here $q$ is the probability that a rate-$(1-p)^{-1}$ exponential random variables is smaller than a rate-$p^{-1}(1-p)^2$ exponential random variable.   These rates come from \eqref{eq:l}. It follows that
$$\ind{X_1 >1} \sum_{k=1}^{\Geo(q)} (1+ \Geo(p)) \preceq K.$$
It is straightforward to compute that $q \sim p$ and $\P[X_1 >1] = 1-p$. So, the left side converges in distribution as $p \to 0$ to a random variable that is stochastically larger than a $\Geo(p^2)$ random variable.  The result follows from the observation that $$\P[\Geo(p^2) > 2p^{-3/2}]= (1-p^2)^{2p^{-3/2}} \approx e^{-2p^{1/2}} \to 1$$
as $p \to 0$.  
\end{proof}

Our main estimate is an exponential bound for larger values of $K$.

\begin{prop} \label{prop:lower2}
For any $\epsilon > 0$, we have
\[
 \max_{k \geq 2/p^{3/2}} \P[K=k] \le e^{- (b - \epsilon)/p}
\]
if $p$ is sufficiently small.
\end{prop}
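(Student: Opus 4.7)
The plan is to bound the integral representation of $\P[K=k]$ via a Laplace-type estimate, inverting the Riemann-sum argument from Lemma~\ref{lem:prob2}. First, observe that $\{K=k\}=\{A_k<B_k\}$, where $A_k:=\max_{1\le j\le k}W_j$ and $B_k:=\min_{j>k}W_j$ with $W_j\sim\mathrm{Exp}(p(1-p)^{j-1})$ independent; since $B_k\sim\mathrm{Exp}((1-p)^k)$ is independent of $A_k$, conditioning gives $\P[K=k]=\E[e^{-(1-p)^k A_k}]=(1-p)^k\int_0^\infty\P[G_{k,t}]\,dt$, where $\P[G_{k,t}]$ is given by \eqref{pdoubleprod}. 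Substituting $u=(1-p)^k t$, then $v=pu$, relabeling $\ell=k-j+1$, and setting $q:=-\log(1-p)\ge p$ (so $(1-p)^{-\ell}=e^{\ell q}$) recasts this as
\[
\P[K=k] \;=\; \tfrac{1}{p}\int_0^\infty J(v)\,dv,\qquad J(v):=\exp\Bigl(-\tfrac{v}{p}+\sum_{\ell=1}^k\log(1-e^{-e^{\ell q}v})\Bigr).
\]

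Next, bound $J$ pointwise. Let $g(y;v):=\log(1-e^{-e^y v})$, which is increasing in $y$. Since $g(\ell q;v)\le q^{-1}\!\int_{\ell q}^{(\ell+1)q}\!g(y;v)\,dy$, summing gives
\[
\sum_{\ell=1}^k g(\ell q;v)\;\le\;\frac{1}{q}\int_0^\infty g(y;v)\,dy + \delta(v),
\]
with $\delta(v)\ge 0$ accounting for the missing pieces $[0,q]$ and $[(k+1)q,\infty)$. For $v$ bounded away from $0$, the first is at most $-\log(1-e^{-v})=O(1)$; the second is negligible because $k\ge 2/p^{3/2}$ with $q\ge p$ forces $kq\ge 2/\sqrt p\to\infty$, making $|g(y;v)|$ super-exponentially small at $y=kq$. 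Define $F(v):=-v/p+q^{-1}\!\int_0^\infty\!g(y;v)\,dy$; substituting $w=e^y v$ gives $F'(v)=-1/p-\log(1-e^{-v})/(qv)$, from which $F$ is strictly concave with a unique global maximum at $v^*=\log 2+O(p)$, where $F(v^*)=-b/p+O(1)$ and $|F''(v^*)|\asymp 1/p$. This is the dual of Lemma~\ref{lem:prob2}: there one evaluates $F$ at the fixed $\alpha=\log 2$ to lower-bound the integrand, while here one bounds the integrand from above by $e^{F(v)+O(1)}$ at every $v$.

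Finally, integrate. Fix $v_0:=e^{-\sqrt{2b}}$ and split $(0,\infty)=(0,v_0]\cup[v_0,\infty)$. On $[v_0,\infty)$ the Gaussian decay of $F$ around $v^*$ at scale $\sqrt q=O(\sqrt p)$ together with Laplace's method gives $\int_{v_0}^\infty J(v)\,dv\le C\sqrt p\,e^{-b/p+O(1)}$. On $(0,v_0]$ use the cruder bound $\log(1-e^{-e^{\ell q}v})\le\log(e^{\ell q}v)$ for $\ell\le\lfloor-\log v/q\rfloor$ and $\le 0$ otherwise; summing yields $\sum_\ell\log(1-e^{-e^{\ell q}v})\le-(\log v)^2/(2q)+O(|\log v|)$, and the choice of $v_0$ forces $(\log v_0)^2/(2q)\ge(b-\epsilon)/p$ for $p$ small, so this region contributes at most $e^{-(b-\epsilon)/p}$. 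Dividing by $p$ absorbs the $O(1/\sqrt p)$ prefactor into $e^{\epsilon/p}$, completing the proof. The main obstacle is the Laplace bookkeeping: $v^*$ only matches $\log 2$ asymptotically, $|F''(v^*)|$ diverges like $1/p$, and the correction $\delta(v)$ blows up as $v\to 0$, so every multiplicative prefactor must be $e^{o(1/p)}$ to be absorbed by the slack $\epsilon/p$.
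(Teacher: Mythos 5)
Your proposal is correct in substance and follows a genuinely different route from the paper's. One small slip: $\{K=k\}$ is not equal to $\{A_k<B_k\}$, only contained in it (a $k$-block can form even if $K<k$, and conversely $K=k$ does imply $A_k<B_k$); so the identity $\P[K=k]=(1-p)^k\int_0^\infty\P[G_{k,t}]\,dt$ should be an inequality $\le$. Since the whole argument is an upper bound this is harmless, and in fact the paper itself writes the same bound as $\P[K=k]\le\P[\text{a $k$-block forms}]$.

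Where your route differs: the paper keeps the integral $\int_0^\infty\P[G_{k,t}]\,dt$ in the original time variable and compares $\P[G_{k,t}]$ against $\P[G_{j(t),t}]$, the maximum over $j$ of the ``$j$-block at time $t$'' probability. Lemma~\ref{lem:lower3} shows this envelope is uniformly $\exp(-b/p+o(1/p))$ for $t\ge t_1$, and Lemma~\ref{lem:lower4} shows the ratio $\P[G_{k,t}]/\P[G_{j(t),t}]$ decays polynomially in $t$ for $t\ge t_0$, which makes the tail of the time-integral summable; small $t$ is handled by a crude product bound. Your proposal instead changes variables $v=p(1-p)^kt$ to pull out a single exponent $F(v)=-v/p+q^{-1}\int_0^\infty\log(1-e^{-e^yv})\,dy$, verifies concavity via the explicit derivative $F'(v)=-1/p-\log(1-e^{-v})/(qv)$, locates the critical point $v^*\approx\log 2$, and runs Laplace's method, with the small-$v$ region controlled by the $(\log v)^2/(2q)$ bound. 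This is a cleaner analytic packaging: everything is contained in one variational principle for $F$, and the role of $\log 2$ is transparent (it is the maximizer of $F$). The paper's two-lemma decomposition, by contrast, gives a slightly more probabilistic story (compare to the best-placed block) and avoids any second-derivative computation, at the cost of a three-regime split in $t$. A minor caveat in your write-up: when $v$ is so small that $\lfloor -\log v/q\rfloor>k$, the Riemann-sum bound must be truncated at $\ell=k$; but since $kq\ge 2/\sqrt p$, that only affects $v<e^{-2/\sqrt p}$, where $J(v)\le 1$ and the contribution is doubly-exponentially small, so the argument survives. Likewise, when invoking Laplace's method one should note $F''\to 0$ and $F'\to-1/p$ as $v\to\infty$, so the tail of $\int_{v_0}^\infty e^{F}$ is controlled by the linear decay of $F$ at rate $1/p$ rather than by uniform concavity; this is easy but worth saying.
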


With these two estimates it is elementary to prove Theorem \ref{thm:KL}.

\begin{proof}[Proof of Theorem \ref{thm:KL}]
Start by decomposing $\P[K \leq e^{(b - \epsilon)/p}]$ as   
\begin{align}
 \P[K \leq e^{(b - \epsilon)/p}] & =  \P[ K < 2/p^{3/2} ]+ \sum_{k = 2/p^{3/2}  }^{ e^{(b - \epsilon)/p}} \P[K = k]. \no
\end{align}
The first term vanishes as $p \to 0$, by Lemma \ref{lem:lower1}. The second term is bounded by 
\[
e^{(b - \epsilon)/p} \max_{k \geq 2/p^{3/2}} \P[K = k],
\]
which also vanishes, by Proposition \ref{prop:lower2} with $\epsilon' = \epsilon/2$.
\end{proof}

It remains to prove Proposition \ref{prop:lower2}. During the proof we will state two lemmas, which we establish immediately after.

\begin{proof}[Proof of Proposition \ref{prop:lower2}.]

In what follows we write $u_p$ as shorthand for $\exp(-b/p + o(1/p))$. The constants implicit in $o(1/p)$ may change from line to line, but in every instance this quantity is independent of $k \geq 2/p^{3/2}$. We will run the process at rate $1/[p(1-p)^{(k-1)}]$. 

Let $G_{j,t}$ be the event that at time $t$ (in this new time scale) all sites less than or equal to $j$ are wet and all larger than $j$ are dry. Call such a formation a \emph{$j$-block}.  We claim that
\begin{equation}
\P[K=k] \leq \frac{1-p}{p} \int_0^\infty \P[ G_{k,t} ] \, dt. \label{Kkintegral}
\end{equation}
With this new time scaling, drops fall on site $j$ at rate $(1 - p)^{j-k}$, and drops fall in the region $[k+1,\infty)$ at rate 
\[
\frac{1}{p(1 - p)^{k-1}} \sum_{j = k+1}^\infty p(1 - p)^{j-1} = \frac{1-p}{p}.
\]
Therefore, given that a block forms at $k$, the block will have an expected lifetime of $\frac{p}{1 - p}$ before a drop falls in $[k+1,\infty)$. The bound \eqref{Kkintegral} follows from this: 
\begin{align}
\int_0^\infty \P[ G_{k,t} ] \, dt & = \E \left[ \int_0^\infty \ind{ G_{k,t}} \,dt \right] \no \\
& = \E[ \text{lifetime of a $k$-block}\;|\; \text{a $k$-block forms}] \P[\text{a $k$-block forms}] \no \\
& = \frac{p}{1-p} \P[\text{a $k$-block forms}] \geq \frac{p}{1-p} \P[ K = k]. \no 
\end{align}

To bound the right-hand side of \eqref{Kkintegral}, we note that by Poisson thinning
\begin{align}
\P[G_{j,t} ] & = \prod_{m=1}^j \left[ 1- \exp(- t(1-p)^{(m-k)}) \right]
\cdot \prod_{m=j+1}^\infty  \exp(- t(1-p)^{(m-k)}). \label{pdoubleprod}
\end{align}

At any time $t$ there is an integer $j(t)$ that maximizes $\P[G_{j,t}]$ over all $j \geq 0$.  Note that $j(t)$ implicitly depends on $k$. We describe how $\P[G_{j(t),t}]$ behaves in the following two lemmas.

\begin{lemma} \label{lem:lower3}
If $t_1 = \exp(-1/ (2\sqrt{p})) \log 2$, then
\[
\max_{t \geq t_1} \P[ G_{j(t),t}] \leq u_p.
\] 
If $t = \log 2$, then $j(t) = k$. 
\end{lemma}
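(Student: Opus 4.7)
The plan is to handle the second claim first, since its ratio computation drives the analysis of the first. From \eqref{pdoubleprod}, every factor cancels in the ratio $\P[G_{j+1,t}]/\P[G_{j,t}]$ except the one indexed by $m = j+1$, giving
$$\frac{\P[G_{j+1,t}]}{\P[G_{j,t}]} = \frac{1 - \exp\bigl(-t(1-p)^{j+1-k}\bigr)}{\exp\bigl(-t(1-p)^{j+1-k}\bigr)} = \exp\bigl(t(1-p)^{j+1-k}\bigr) - 1.$$
This ratio is at least $1$ exactly when $t(1-p)^{j+1-k} \geq \log 2$. When $t = \log 2$ the condition reduces to $(1-p)^{j+1-k} \geq 1$, i.e.\ $j \leq k-1$, so $\P[G_{j,\log 2}]$ is nondecreasing in $j$ up to $j = k$ and strictly decreasing thereafter, giving $j(\log 2) = k$.

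For the first claim, the same ratio analysis identifies $j(t)$ in general: $j(t)$ is the largest $j$ for which $s := t(1-p)^{j-k}$ satisfies $s \geq (\log 2)/(1-p)$; equivalently, at $j = j(t)$ one has $s \in [\log 2, (\log 2)/(1-p))$, so $s = \log 2 + O(p)$. Setting $p' := -\log(1-p)$ so that $(1-p)^{-\ell} = e^{p'\ell}$, I would rewrite \eqref{pdoubleprod} as
$$\log \P[G_{j(t),t}] = \sum_{\ell=0}^{j(t)-1} \log\bigl[1 - \exp(-s e^{p'\ell})\bigr] - \frac{s(1-p)}{p}.$$
The hypothesis $t \geq t_1$ together with $s \leq (\log 2)/(1-p)$ implies $(1-p)^{k - j(t)} = t/s \geq (1-p)e^{-1/(2\sqrt{p})}$, so $k - j(t) \leq 1/(2p^{3/2}) + O(1)$. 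Under the standing assumption $k \geq 2/p^{3/2}$, this forces $j(t) \geq (3/2) p^{-3/2}(1 + o(1))$, and in particular $p' j(t) \to \infty$ as $p \to 0$.

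I would then estimate the sum by an integral, in the spirit of Lemma \ref{lem:prob2}. Because $g(y) := \log[1 - \exp(-s e^y)]$ is nonpositive and increasing in $y$, the left-endpoint Riemann comparison gives
$$\sum_{\ell=0}^{j(t)-1} g(p'\ell) \leq \frac{1}{p'}\int_0^{p' j(t)} g(y)\, dy.$$
Since $p' j(t) \to \infty$ and $g(y) \to 0$ very rapidly, $\int_0^{p' j(t)} g(y)\,dy = \int_0^\infty g(y)\,dy + o(1)$; and since $g$ depends continuously on $s$ with bounded $s$-derivative, $\int_0^\infty g(y)\,dy = \int_0^\infty \log[1 - \exp(-(\log 2) e^y)]\,dy + O(p) = (\log 2 - b) + O(p)$ by the definition of $b$. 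Using $1/p' = 1/p + O(1)$ together with $s(1-p)/p \geq (\log 2)/p - \log 2$ and combining all estimates yields
$$\log \P[G_{j(t),t}] \leq \frac{\log 2 - b}{p} - \frac{\log 2}{p} + O(1) = -\frac{b}{p} + o(1/p),$$
uniformly in $t \geq t_1$, which is the desired $u_p$ bound.

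The main obstacle will be carrying the Riemann-sum comparison through with error estimates uniform in $t \geq t_1$: both the integration endpoint $p' j(t)$ and the parameter $s$ depend on $t$, and the $O(p)$ perturbation of $s$ away from $\log 2$ must be controlled carefully so that it contributes only $O(1)$ to the final estimate rather than infecting the $1/p$ leading order.
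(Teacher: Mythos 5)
Your proposal is correct and follows essentially the same path as the paper's proof: identify $j(t)$ via the ratio $\P[G_{j+1,t}]/\P[G_{j,t}]$ (pinning down $s := t(1-p)^{j(t)-k} \in [\log 2, (\log 2)/(1-p))$ and deducing $j(\log 2) = k$), then convert $\log\P[G_{j(t),t}]$ into a Riemann sum, control the endpoints using $k \geq 2p^{-3/2}$ and $t \geq t_1$, and compare to the integral defining $b$. The only slip is the phrase ``largest $j$ for which $s \geq (\log 2)/(1-p)$'' (it should be $s \geq \log 2$), which you immediately correct with the interval $s \in [\log 2,(\log 2)/(1-p))$, so nothing is affected.
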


\begin{lemma} \label{lem:lower4}
Let $t_0= 3 \log 2$. For any $n > 1$,
$$
\frac{\P[ G_{k,t}]}{\P[ G_{j(t), t}]} \le t^{-n} \quad\hbox{for all $t \ge t_0$.}
$$ 
holds if $p$ is sufficiently small (depending on $n$, but not on $k$).
\end{lemma}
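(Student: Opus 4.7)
The plan is to use the closed form \eqref{pdoubleprod} to write $\P[G_{k,t}]/\P[G_{j(t),t}]$ as a finite product with explicit factors, and then estimate its logarithm by a geometric-series computation. The starting point is the direct ratio
$$\frac{\P[G_{j+1,t}]}{\P[G_{j,t}]} = e^{t(1-p)^{j+1-k}} - 1,$$
which is monotonically decreasing in $j$. Hence $j \mapsto \P[G_{j,t}]$ is unimodal, and its maximizer $j(t)$ is pinned down by $t(1-p)^{j(t)-k} \ge \log 2 > t(1-p)^{j(t)-k+1}$. For $t \ge t_0 = 3 \log 2$ and any $p < 2/3$, the second inequality rules out $j(t)=k$, so $L := j(t) - k \ge 1$ and
$$\frac{\P[G_{k,t}]}{\P[G_{j(t),t}]} = \prod_{l=1}^{L} \frac{1}{e^{t(1-p)^l} - 1}.$$

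Next I would bound the product from above by controlling its logarithm. For each $l \le L$, the bound $t(1-p)^l \ge t(1-p)^L \ge \log 2$ yields $e^{t(1-p)^l} - 1 \ge \frac12 e^{t(1-p)^l}$, and hence $\log(e^{t(1-p)^l} - 1) \ge t(1-p)^l - \log 2$. Summing the resulting geometric series, and using $(1-p)^L < (\log 2)/(t(1-p))$ together with $L \le \log(t/\log 2)/p$ (the latter from $-\log(1-p) \ge p$), gives
$$-\log \frac{\P[G_{k,t}]}{\P[G_{j(t),t}]} \ge \frac{1}{p}\bigl[(1-p)t - \log 2 \cdot \log(et/\log 2)\bigr].$$

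The remaining step is to verify that the right-hand side exceeds $n \log t$ for all $t \ge t_0$ once $p$ is small enough in terms of $n$. The function $t \mapsto (\log 2) \log(et/\log 2)/t$ is decreasing on $[t_0, \infty)$ and equals $(1+\log 3)/3 < 1$ at $t=t_0$, so once $p$ is smaller than some absolute constant the bracketed quantity is bounded below by $c\,t$ for some absolute $c > 0$, giving $-\log(\P[G_{k,t}]/\P[G_{j(t),t}]) \ge ct/p$. Combining with the universal bound $\min_{t \ge t_0} t/\log t = e$ then reduces the required inequality $ct/p \ge n \log t$ to $p \le ce/n$. The step that requires the most care is juggling these two smallness conditions on $p$ and verifying that all constants are independent of $k$; conceptually the proof is little more than a geometric sum plus the elementary estimate $e^x - 1 \ge e^x/2$ for $x \ge \log 2$.
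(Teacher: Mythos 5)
Your proof is correct. Both you and the paper start from the product representation $\P[G_{k,t}]/\P[G_{j(t),t}] = \prod_{\ell=1}^{L} \big(e^{t(1-p)^\ell}-1\big)^{-1}$ with $L = j(t)-k \geq 1$ for $t \geq t_0$, and both bound its logarithm. But the two proofs exploit the product differently. The paper bounds each of the roughly $\ell^*(t) = \log t/|\log(1-p)|$ factors with $t(1-p)^\ell \geq 1$ by the single constant $e^{-1}/(1-e^{-1}) < 1$, which immediately gives the polynomial bound $t^{-\gamma/|\log(1-p)|}$ with $\gamma = -\log(e^{-1}/(1-e^{-1}))$; since $\gamma/|\log(1-p)| \sim \gamma/p$, any fixed $n$ is eventually dominated. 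You instead keep each factor's full contribution $t(1-p)^\ell - \log 2$ and sum the geometric series, arriving at the stronger estimate $-\log\big(\P[G_{k,t}]/\P[G_{j(t),t}]\big) \geq c\,t/p$ for an absolute $c>0$, i.e.\ exponential rather than polynomial decay in $t$. This requires a bit more bookkeeping (the upper bound $L \leq \log(t/\log 2)/p$, the monotonicity of $(\log 2)\log(et/\log 2)/t$, and the absolute lower bound $\min_{t\ge t_0} t/\log t = e$), but all the constants are independent of $k$ as required, and the final reduction of $ct/p \geq n\log t$ to $p \leq ce/n$ is clean. In short: same skeleton, sharper quantitative conclusion, at the cost of slightly heavier arithmetic. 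Either proof settles the lemma; the paper's is the more economical route for what is actually needed.
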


The combination of Lemma \ref{lem:lower3} and Lemma \ref{lem:lower4} allows us to control the integral in \eqref{Kkintegral} from $t_1$ to $\infty$:
\[
\int_{t_1}^\infty \P[ G_{k,t}] \,dt \leq u_p (t_0  - t_1) +  u_p \int_{t_0}^\infty t^{-n} \,dt.
\]
To control the integral over small times $t < t_1 < \epsilon$, we use the bound
$$
\P[G_{k,t}] \le  \prod_{m=k - 1/p}^k \left[ 1- \exp(- t(1-p)^{(m-k)}) \right] \le [ 1- \exp(-\epsilon e )]^{1/p} \le e^{-2/p}
$$
for $\epsilon$ is small enough. 
Since $b < 2$ and since $t_1 < \epsilon$ for $p$ small, it follows that $$\int_0^{t_1} \P[ G_{k,t} ] \, dt \le \epsilon u_p$$
and the proof of Proposition \ref{prop:lower2} is complete.
\end{proof}

Now we need to establish the two lemmas used in the proof of Proposition \ref{prop:lower2}.

\begin{proof}[Proof of Lemma \ref{lem:lower3}.]
For fixed $t$, define the real number $j^* = j^*(t)$ by $t(1 - p)^{j^* - k} = \log 2$. From \eqref{pdoubleprod}, we see that the ratio
\[
\frac{\P[G_{j,t}]}{\P[G_{j-1,t}]} = \frac{1 - \exp( - t(1 - p)^{j - k})}{\exp( - t(1 - p)^{j - k})}
\]
is greater than $1$ if $j < j^*$ and it is less than $1$ if $j > j^*$. If $j^* \geq 1$, this shows that $\P[G_{j,t}]$ is maximized at 
	$$j(t) = \max \{ j \in \mathbb Z \colon j \leq j^*\}.$$ 
 Otherwise, $\P[G_{j,t}]$ is maximized at $j = 0$.  Observe that $j^*(t) = j(t) = k$ when $t=\log 2$. The fact that $t(1 - p)^{j^* - k} = \log 2$ implies  $p(j(t) - k) = \log(t/\log 2) + O(p)$ as $p \to 0$.

Now we estimate $\P[G_{j(t),t}]$, assuming $t \geq t_1$. Recall that $k \geq 2p^{-3/2}$ is assumed in Proposition \ref{prop:lower2}.  This and the assumption that $t \geq t_1$ guarantees that $j(t) \geq p^{-3/2}$.  By \eqref{pdoubleprod} we have
\begin{align}
p \log \P[G_{j(t),t}] & = p\sum_{1 \leq m \leq j(t)} \log \left(1 - \exp( - t(1 - p)^{m - k}) \right) - p \sum_{m > j(t)} t ( 1 - p)^{m-k}  \no \\
& =  p\sum_{1 \leq m \leq j(t)} \log \left(1 - \exp( - t(1 - p)^{m - k}) \right) - t (1 - p)^{j(t)+1 - k}.
\end{align}
The last term is equal to $- (1 - p)^{1 + j(t) - j^*(t)} \log 2$ which converges to $-\log 2$ as $p \to 0$, uniformly in $k$, since $- 1 \leq j(t) - j^*(t) \leq 0$.  The first sum is a Riemann sum approximating an integral.  Let us write the sum as
\begin{align}
 p\sum_{ 1 \leq m \leq j(t)} \log \left(1 - \exp( - (1 - p)^{m - k + \frac{\log(t)}{\log(1 - p)}}) \right) & = p\sum_{ \ell \in I_t} \log \left(1 - \exp( - (1 - p)^{\ell/p}) \right)
\end{align}
where the index set is 
\[
I_t = \left\{ \ell +  p \frac{\log t}{\log(1 - p)} \colon  \ell \in p \mathbb{Z} , \quad p(1 - k) \leq \ell \leq p(j(t) - k)  \right\}.
\]
Since $p(j(t) - k) = \log(t/\log 2) + O(p)$, the upper index in the sum is
\begin{align*}
 p(j(t) - k) +  p \frac{\log t}{\log(1 - p)} &= p(j^*(t) - k) +  p \frac{\log t}{\log(1 - p)} + p(j(t) - j^*(t)) \\ 
 	&= - \log(\log 2) + O(p).
\end{align*}
This converges to $- \log\log2$ as $p \to 0$, uniformly over $t \geq t_1$ and $k \geq 2 p^{-3/2}$.  The lower index in $I_t$ is
\[
p(1 - k) +  p \frac{\log t}{\log(1 - p)} \leq p -  2p^{-1/2}  +  p \frac{\log t_1}{\log(1 - p)},
\]
which converges to $-\infty$ as $p \to 0$, also uniformly over $t \geq t_1$ and $k$.  Consequently,
\begin{align}
\lim_{p \to 0} p \log \P[G_{j(t),t}] & = \int_{-\infty}^{- \log(\log 2)} \log(1 - \exp( - e^{-r})) \,dr - \log 2 \no \\
& = \int_{0}^{\infty} \log \left( 1 - \exp( -  e^{y} \log 2) \right) \,dy - \log 2 \no \\
 &= b, \no
\end{align}
and the convergence is uniform over $t \geq t_1$ and $k \geq 2p^{-3/2}$. The final equality 
gives the claimed bound on $\P[G_{j(t),t}]$.
\end{proof}

\begin{proof} [Proof of Lemma \ref{lem:lower4}.]

By the definition of $j^*(t)$ and $j(t)$ in Lemma \ref{lem:lower3}, $j(t) > k$ whenever $t > (1 - p)^{-1} \log(2)$. So, $t \geq t_0$ guarantees that $j(t) > k$, assuming $p < 1/2$. In fact, $(j(t) - k) = p^{-1} \log(t/\log 2) + O(1)$ as $p \to 0$.  
Since $j(t) > k$, from \eqref{pdoubleprod} we see that
\begin{align}
\frac{\P( G_{k,t} )}{ \P( G_{j(t),t} )}  =  \prod_{m=k+1}^{j(t)} 
\frac{  \exp(- t(1-p)^{(m-k)}) } { 1- \exp(- t(1-p)^{(m-k)})  }  =  \prod_{\ell=1}^{j(t)-k} \frac{  \exp(- t(1-p)^{\ell}) }{   1- \exp(- t(1-p)^{\ell})   }. \label{Pratio2}
\end{align}
 The ratios in the last product are increasing with respect to $\ell$, and are all bounded by $1$ (since $\ell \leq j(t) - k$, and by definition of $j(t)$).   Define $\ell^*(t) \in [1,j(t) - k)$ by $t(1 - p)^{l^*} = 1$. If $p$ is small enough, such $\ell^*$ exists for all $t \geq t_0 > 1$. Then for all $\ell \leq \ell^*$ we have
\[
\frac{  \exp(- t(1-p)^{\ell}) }{  1- \exp(- t(1-p)^{\ell})  } \leq \frac{  \exp(- t(1-p)^{\ell^*}) }{  1- \exp(- t(1-p)^{\ell^*})  } = \frac{e^{-1}}{1 - e^{-1} } < 1.
\]
Hence, the product in \eqref{Pratio2} is bounded by
\begin{align}
\frac{\P[ G_{k,t} ]}{ \P[ G_{j(t),t} ]}  \leq \left( \frac{e^{-1}}{1 - e^{-1} } \right)^{\ell^*}.
\end{align}
By definition, $\ell^* = -\log(t)/\log(1 - p)$. So, with $\gamma = -\log \left(\frac{e^{-1}}{1 - e^{-1} }\right) > 0$, this is
\[
\frac{\P[ G_{k,t} ]}{ \P[ G_{j(t),t} ]} \leq e^{-\gamma \ell^*(t)} = t^{-\gamma/|\log (1 - p)|}.
\]
For any $n > 1$, $\gamma/|\log (1 - p)| > n$ if $p$ is small enough, depending on $n$ but not on $k$.

\end{proof}

\subsection{Proof of Theorem \ref{cor:N}} \label{sec:N}

\begin{proof} 
We begin by noting that, by having raindrops fall according to a Poisson point process, we have \emph{Poissonized} the rainstick process. This makes $N$ into a real-valued, rather than integer-valued, random variable. \emph{Depoissonizing} back to the integer case is standard (see \cite{poisson}), and we omit those details.

Let $\beta<\alpha$. If $n = \exp(e^{\beta/p})$ the probability that a site beyond $e^{\alpha/p}$ becomes wet before time $n$ is
\begin{align}
1 - \exp(-n(1-p)^{e^{\alpha/p}}) = 1 - \exp\left(-\exp(e^{\beta/p}) \cdot \exp(\log(1-p) e^{\alpha/p}) \right) \to 0 \label{alphabetalim}
\end{align}
as $p \to 0$. Therefore, if $a < a' < b$, we must have
\[
\P \left[\{N \le \exp(e^{a/p}) \} \cap \{K \ge e^{a'/p}\} \right] \to 0, \quad \text{as} \;p \to 0.
\]
Since $\P [K \ge e^{a'/p} ]  \to 1$ by Theorem \ref{thm:KL}, this implies that $\P[N \le \exp(e^{a/p}) ]) \to 0$.

If $\beta > \alpha$, the probability in \eqref{alphabetalim} goes to $1$, as $p \to 0$.  This means that if $c>c'>b$, we must have
\[
\P \left[M_n \ge e^{c'/p}, \;\; n = \exp(e^{c/p}) \right] \to 1, \quad \text{as} \;p \to 0. 
\]
On the other hand, the event $\{ K \geq e^{c'/p}\}$ contains the event $\{ M_n \ge e^{c'/p}, \;\; n = \exp(e^{c/p}) \} \cap \{ N \ge e^{c/p}\}$.  Since $\P[ K \geq e^{c'/p} ] \to 0$, by Theorem \ref{thm:KU}, we conclude that $\P[ N \ge e^{c/p}] \to 0$.
\end{proof}

\subsection{Proof of Theorem \ref{prop:stretch}} \label{sec:stretch}

\begin{proof}
Scaling time to eliminate the normalizing constant $C_\alpha$, we can assume that rain lands on $k$ 
at rate $e^{-k^\alpha} - e^{-(k+1)^\alpha}$ for $k=1, 2, \ldots$.
When the maximal wet site is at $M_t$, let $\overline H_t$ be the number of dry sites
in $[M_t-\ell(t),M_t))$ where  $\ell(t) = M_t^{1-\alpha}/2$. We divide by 2 so that if at time $t$ there is a jump of more than $k^{1-\alpha}$,
which will increase the size of the viewing window, all of the sites within $\ell(t)$ of the boundary are vacant. The first block size $K$ is finite only if $\bar H_t$ hits zero in finite time.

As before to simplify the arithmetic we run time at rate $\exp(M_t^\alpha)$. When we do this, jumps of $M_t$ larger than $2\ell(t)$ occur 
at rate $e^{- (M_t + 2\ell(t))^\alpha} e^{M_t^\alpha}  \approx e^{-\alpha}$ since when  $j \ll k$ we have
$$
(k+j)^{\alpha} - k^{\alpha} = \int_k^{k+j} \alpha x^{\alpha-1} \, dx \approx \alpha k^{\alpha-1}j,
$$
which is approximately $\alpha$ when $j = k^{1-\alpha}$ and $k$ is large.
 Such jumps in $M$ reset $\bar H$ to its maximal value of $M^{1-\alpha}/2$. 

Using the same reasoning as in the proof of Theorem \ref{thm:KL}, we see that $H_t$ jumps from $i$ to $i-1$ at a rate no more than  $r_i = e^{\alpha}i/\ell(t) + i/\ell(t)$. If $\epsilon = e^{-\alpha}/(e^\alpha+1)$ then when $i/\ell(t) < \epsilon$ we have $r_i < e^{-\alpha}$. So, given that the maximal wet site is at $M_t = k$ and $\bar H_t = \ell(t) =  k^{1-\alpha}/2$ (its maximum possible value), the probability that $\bar H$ becomes $0$ before resetting to its maximum value is
$\le \sigma_k = (1/2)^{\epsilon \ell(t)} = (1/2)^{\epsilon k^{1-\alpha}/2}$. Since $\sum_k \sigma_k < \infty$, it follows that with positive probability $\bar H$ will reset to its maximum value infinitely many times before hitting zero. Thus, $\P[K = \infty]>0$. 
	
\end{proof}

\section*{Acknowledgements}
Thanks to Noah Forman for sharing a continuous version of this problem that he called the ``whack-a-mole" process, and to Jim Pitman for helpful correspondence. I. Cristali, V. Ranjan and J. Steinberg were undergraduates participating in the Summer 2017 Duke Opportunities in Math program, partially supported by NSF grant DMS 1406371, and supervised by the last four authors.  M.\ Junge was partially supported by NSF grant DMS 1614838, R.\ Durrett  by DMS 1505215, and J.\ Nolen and E.\ Beckman by DMS 1351653.

\bibliographystyle{amsalpha}
\bibliography{rainstick}

\end{document}